\newcommand{\IZ}{\mathbb Z}
\newcommand{\IQ}{\mathbb Q}
\newcommand{\IN}{\mathbb N}
\newcommand{\IR}{\mathbb R}
\newcommand{\la}{\langle}
\newcommand{\ra}{\rangle}
\newcommand{\w}{\omega}
\title[Partitions of matroids into independent subsets]{Partitions of groups and matroids into independent subsets}
\author{Taras~Banakh, Igor~Protasov}
\address{Department of Mathematics, Ivan Franko National University of Lviv, Universytetska 1, 79000 Lviv, Ukraine}
\email{t.o.banakh@gmail.com}
\address{Department of Cybernetics, Kyiv National University, Volodymyrska 64, 01033 Kyiv, Ukraine}
\email{i.v.protasov@gmail.com}
\subjclass{05B35, 05A18}
\keywords{matroid, partition, independent subset}
\theoremstyle{plain}
\newtheorem{theorem}{Theorem}
\newtheorem{lemma}{Lemma}
\newtheorem{corollary}{Corollary}
\newtheorem{problem}{Problem}
\newtheorem{claim}{Claim}
\theoremstyle{definition}
\newtheorem{remark}{Remark}
\begin{document}

\begin{abstract}
Can the set $\mathbb{R}\setminus\{0\}$ be covered by countably many linearly (algebraically) independent subsets over the field $\mathbb{Q}$? We use a matroid approach to show that an answer is ''Yes'' under the Continuum Hypothesis, and "No" under its negation.
\end{abstract}
\maketitle

In this paper we discuss partitions of groups and other algebraic objects into independent subsets. The notion of an independent set can be defined for any hull operator.

By a {\em hull operator} on a set $X$ we understand any function  $\la\cdot\ra:\mathcal P_X\to\mathcal P_X$ defined on the power-set $\mathcal P_X$ of $X$ such that $A\subset \la A\ra\subset\la B\ra$ for any subsets $A\subset B$ of $X$.

A subset $A\subset X$ is called {\em independent} (with respect to the  hull  operator $\la\cdot\ra$) if $a\notin\la A\setminus\{a\}\ra$ for any point $a\in A$. 

We shall say that a hull operator $\la\cdot\ra:\mathcal P_X\to\mathcal P_X$ on a set $X$ 
\begin{itemize}
\item is {\em idempotent} if $\la\la A\ra\ra=\la A\ra$ for each subset $A\subset X$;
\item has {\em finite supports} if for each $A\subset X$ and $a\in\la A\ra$ there is a finite subset $F\subset A$ with $a\in\la F\ra$;
\item has {\em the MacLane-Steinitz exchange property} if for any subset $A\subset X$ and points $x,y\in X\setminus\la A\ra$ the inclusion $x\in\la A\cup\{y\}\ra$ is equivalent to $y\in\la A\cup\{x\}\ra$; 
\end{itemize}

By a {\em matroid} we understand a pair $(X,\la\cdot\ra)$ consisting of a set $X$ and an idempotent hull operator $\la \cdot\ra:\mathcal P_X\to\mathcal P_X$ that has finite supports and the MacLane-Steinitz exchange property. For finite $X$ our definition of matroid argees with the classical one, see \cite[1.4.4]{Ox} or \cite[p.~270]{b1}.

\begin{theorem}{\label{t1}} Let $\kappa$ be an infinite cardinal, $X$ be a set of cardinality $|X|=\kappa^+$ and $\la\cdot\ra:\mathcal P_X\to\mathcal P_X$ be a hull operator with finite supports and the MacLane-Steinitz exchange property. If $|\la F\ra\setminus\la\varnothing\ra|\le\kappa$ for each finite subset $F\subset X$, then the set $X\setminus\la\varnothing\ra$ can be covered by  $\kappa$ many independent subsets.
\end{theorem}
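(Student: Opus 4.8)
The plan is to build a coloring $c:X\setminus\la\varnothing\ra\to\kappa$ all of whose color classes are independent; a covering by $\kappa$ independent subsets is then immediate. Two preliminary reductions come for free from finite supports. First, independence has finite character: a set is independent iff each of its finite subsets is (if $a\in\la A\setminus\{a\}\ra$ then $a\in\la F\ra$ for a finite $F\subseteq A\setminus\{a\}$, so $F\cup\{a\}$ is a finite dependent subset). Consequently a set is dependent iff it contains a \emph{circuit}, i.e.\ a minimal dependent set, and every circuit is finite. So it suffices to arrange that no color class contains a circuit.

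The one genuine use of the exchange property is the following lemma, which I would prove first: in a circuit $D$ every element lies in the hull of the others, $d\in\la D\setminus\{d\}\ra$ for all $d\in D$. Indeed $D$ is dependent, so $d_0\in\la D\setminus\{d_0\}\ra$ for some $d_0$; for any other $e$, put $A=D\setminus\{d_0,e\}$, note $d_0,e\notin\la A\ra$ because the proper subsets $D\setminus\{e\}$ and $D\setminus\{d_0\}$ of the circuit are independent, and apply the MacLane--Steinitz property to $d_0\in\la A\cup\{e\}\ra$ to get $e\in\la A\cup\{d_0\}\ra=\la D\setminus\{e\}\ra$.

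The heart of the argument is a filtration that bounds how far dependencies reach. Call $Z\subseteq X\setminus\la\varnothing\ra$ \emph{relatively closed} if $\la F\ra\setminus\la\varnothing\ra\subseteq Z$ for every finite $F\subseteq Z$; using $|\la F\ra\setminus\la\varnothing\ra|\le\kappa$ and the fact that a set of size $\le\kappa$ has only $\kappa$ finite subsets, the relative closure $\cl(W)$ of any $W$ with $|W|\le\kappa$ is obtained by iterating $\w$ times and has size $\le\kappa$. Enumerating $X\setminus\la\varnothing\ra=\{z_\alpha:\alpha<\kappa^{+}\}$, I would build a continuous increasing chain $(Z_\eta)_{\eta<\kappa^{+}}$ of relatively closed sets with $Z_0=\varnothing$, $Z_{\eta+1}=\cl(Z_\eta\cup\{z_\eta\})$, and $\bigcup_{\eta<\kappa^{+}}Z_\eta=X\setminus\la\varnothing\ra$; induction gives $|Z_\eta|\le\kappa$ at every stage, and the levels $L_\eta=Z_{\eta+1}\setminus Z_\eta$ partition $X\setminus\la\varnothing\ra$ with $|L_\eta|\le\kappa$. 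Now simply choose, for each $\eta$, an injection of $L_\eta$ into $\kappa$ and let $c$ agree with it on $L_\eta$; the color palettes on different levels need not be coordinated. To verify each class is independent, suppose a class contained a circuit $D$ and let $\eta^{*}$ be the largest level meeting $D$. Since all of $D$ shares one color while $c$ is injective on $L_{\eta^{*}}$, exactly one point $m\in D$ lies in $L_{\eta^{*}}$, whence $D\setminus\{m\}\subseteq\bigcup_{\xi<\eta^{*}}L_\xi=Z_{\eta^{*}}$. The lemma gives $m\in\la D\setminus\{m\}\ra$, and relative closedness of $Z_{\eta^{*}}$ then forces $m\in Z_{\eta^{*}}$, contradicting $m\in L_{\eta^{*}}$.

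I expect the main obstacle to be the existence of the coloring itself, not its verification. A naive transfinite greedy assignment founders precisely because the hulls of the $\kappa$ partially built classes can jointly contain the next point, leaving no admissible color; the filtration is exactly the device that localizes the difficulty, capping at each level the range of any dependency. Two points deserve care because idempotence is \emph{not} assumed. I deliberately avoid the usual notion of a closed set ($\la Y\ra=Y$) in favor of the relative closure above, which both sidesteps the missing idempotence and neutralizes a possibly enormous $\la\varnothing\ra$: every support $F$ arising in the verification lies inside a color class, hence inside $X\setminus\la\varnothing\ra$, so only finite subsets of the $Z_\eta$ ever matter. The cardinal jump to $\kappa^{+}$ enters only to guarantee that the chain exhausts $X\setminus\la\varnothing\ra$ while each level stays of size $\le\kappa$, so that an injection into the $\kappa$ colors exists.
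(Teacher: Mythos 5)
Your proof is correct, and while the verification at the end coincides with the paper's, your construction of the decomposition is genuinely different. Both arguments rest on the same two pillars: the circuit lemma (every element of a minimal dependent set lies in the hull of the others; your derivation via the exchange property is exactly the paper's), and an increasing continuous chain of $\kappa^+$ many hull-closed sets of size $\le\kappa$ whose successive differences are colored injectively, so that the top point of a would-be monochrome circuit yields a contradiction. The difference is how the chain is built. The paper extracts it from a basis: Zorn's Lemma gives a maximal independent set $A$, the exchange property gives $X=\la A\ra$, a cardinality estimate $|\la B\ra\setminus\la\varnothing\ra|\le\max\{|B|,\kappa\}$ gives $|A|=\kappa^+$, and the chain consists of the hulls $\la A_\alpha\ra$ of initial segments of $A$. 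You build yours by a L\"owenheim--Skolem-style closure iteration along an arbitrary enumeration of $X\setminus\la\varnothing\ra$, which needs no basis, no Zorn's Lemma, and no use of exchange outside the circuit lemma. Your route also buys a genuine point of rigor: the theorem does not assume idempotence, and the paper's final step ($f_n\notin\la F\setminus\{f_n\}\ra$ because $F\setminus\{f_n\}\subseteq\la A_{\alpha_n}\ra$ while $f_n\notin\la A_{\alpha_n}\ra$) tacitly needs $\la\la A_{\alpha_n}\ra\ra=\la A_{\alpha_n}\ra$, i.e.\ exactly the idempotence that is not assumed; your relative closures are closed under hulls of their finite subsets by construction, which is precisely the subtlety you flag. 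What the paper's route gives in return is structural information your argument forgoes: an explicit basis of cardinality $\kappa^+$, and the hull-cardinality estimate for arbitrary (not just finite or $\kappa$-sized) subsets.
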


\begin{proof} If $|X\setminus\la \varnothing\ra|\le\kappa$, then $X\setminus\la\varnothing\ra$ can be decomposed into $\le\kappa$ many singletons (which are independent sets). So, we assume that $|X\setminus\la\varnothing\ra|=\kappa^+$.

The finite support property of the hull operator and Zorn's Lemma imply the existence of a maximal independent subset $A\subset X$. We claim that $X=\la A\ra$. Assuming the opposite, we can find a point $x\in X\setminus\la A\ra$ and consider the set $A\cup\{x\}$ which is dependent as $A$ is maximal independent. Consequently, there is a point $a\in A\cup\{x\}$ such that $a\in\la (A\cup\{x\})\setminus\{a\}\ra$.
This point $a$ is not equal to $x$ as $x\notin\la A\ra$. Let $B=A\setminus\{a\}$. Since $A$ is independent, $a\notin\la A\setminus\{a\}\ra=\la B\ra$. Since the hull operator $\la \cdot\ra$ has the MacLane-Steinitz exchange property and $x,a\notin\la B\ra$, the inclusion $a\in\la B\cup\{x\}\ra$ implies $x\in\la B\cup\{a\}\ra=\la A\ra$, which contradicts the choice of $x$. Therefore $X=\la A\ra$. 

\begin{claim}\label{cl1} $|\la B\ra\setminus\la\varnothing\ra|\le\max\{|B|,\kappa\}$ for any subset $B\subset X$.
\end{claim}

\begin{proof} Let $[B]^{<\w}$ denote the family of finite subsets of $B$. Since the hull operator $\la\cdot\ra$ has finite supports, $\la B\ra=\bigcup_{F\in[B]^{<\w}}\la F\ra$. By our assumption, $|\la F\ra\setminus\la\varnothing\ra|\le\kappa$ for each finite subset $F\subset X$. Consequently,
$$
\begin{aligned}
|\la B\ra\setminus\la\emptyset\ra|&=\Big|\bigcup_{F\in[B]^{<\w}}\la F\ra\setminus\la\varnothing\ra\Big|\le\\
&\sum_{F\in[B]^{<\w}}|\la F\ra\setminus\la\varnothing\ra|\le\max\{\kappa,|[B]^{<\w}|\}\le\max\{\kappa,|B|\}.
\end{aligned}$$
\end{proof}

By Claim~\ref{cl1}, $\kappa^+=|X\setminus\la\varnothing\ra|=|\la A\ra\setminus\la\varnothing\ra|\le\max\{\kappa,|A|\}$ and thus $|A|=\kappa^+$. Fix an injective enumeration $A=\{a_\alpha:\alpha<\kappa^+\}$.

For every ordinal $\alpha<\kappa^+$,  put
$$A_\alpha=\{a_\gamma:\gamma<\alpha\}
\mbox{ \ and \ } X_\alpha=\la A_{\alpha+1}\ra\setminus \la A_\alpha\ra.$$

Then $X\setminus\la \varnothing\ra=\bigcup_{\alpha<\kappa^+}X_\alpha$ and 
for every $\alpha<\kappa^+$ $$|X_\alpha|=|\la A_{\alpha+1}\ra\setminus\la A_\alpha\ra|\le|\la A_{\alpha+1}\ra\setminus\la\varnothing\ra|\leqslant\kappa$$ according to Claim~\ref{cl1}. For every $\alpha<\kappa^+$  fix an injection $\chi_\alpha:X_\alpha\to\kappa$ and define a function $\chi:X\setminus\la \varnothing\ra\to\kappa$ letting $\chi|{X_\alpha}=\chi_\alpha$ for all $\alpha<\kappa^+$. For every $\lambda<\kappa$ consider the set $Z_\lambda=\chi^{-1}(\lambda)$. We claim that $\{Z_\lambda:\lambda<\kappa\}$ is a desired partition into $\kappa$ many independent subsets.

 Assuming that some set $Z_\lambda$ is dependent, we can find a finite dependent subset $F\subset Z_\lambda$. We can assume that $F$ is a minimal dependent subset. Since $F$ is dependent, there is an element $a\in F$ such that $a\in \la F\setminus\{a\}\ra$. We claim that $x\in\la F\setminus\{x\}\ra$ for any element $x\in F$. This is clear if $x=a$. So, we assume that $x\ne a$. Consider the set $E=F\setminus\{x,a\}$. Since $F$ is minimal dependent, the subsets $E\cup\{a\}$ and $E\cup\{x\}$ are independent and thus $a,x\notin\la E\ra$. Since $a\in \la F\setminus\{a\}\ra=\la E\cup\{x\}\ra$ the MacLane-Steinitz exchange property of the hull operator $\la \cdot\ra$ guarantees that $x\in\la E\cup\{a\}\ra=\la F\setminus\{x\}\ra$.

Since $\chi$ is injective on each subset $X_\alpha$, there exist a numeration $F=\{f_1,\ldots,f_n\}$ and ordinals $\alpha_1<\alpha_2<\ldots<\alpha_n<\kappa^+$ such that $f_1\in Y_{\alpha_1},\ldots,f_n\in Y_{\alpha_n}$. Then $f_n\notin\la  \{f_1,\ldots,f_{n-1}\}\ra=\la F\setminus\{f_n\}\ra$, which is a desired contradiction. 
\end{proof}

\begin{corollary}{\label{c1}}
Let $\kappa$ be an infinite cardinal, $X\setminus\{0\}$ be a vector space over a field $F$ such that $|F|\leqslant\kappa$ and $\dim X=\kappa^+$. Then $X\setminus\{0\}$ can be partitioned in $\kappa$ linearly independent subsets.
\end{corollary}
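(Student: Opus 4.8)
The plan is to realize the desired partition as an application of Theorem~\ref{t1} to the hull operator $\la\cdot\ra:\mathcal P_X\to\mathcal P_X$ assigning to each $A\subset X$ its linear span $\la A\ra$ over the field $F$. This is visibly a hull operator, since $A\subset\la A\ra$, and $A\subset B$ implies $\la A\ra\subset\la B\ra$. Under this operator $\la\varnothing\ra=\{0\}$, so that $X\setminus\la\varnothing\ra=X\setminus\{0\}$, and a subset $A\subset X$ is independent in the sense of the hull operator exactly when it is linearly independent over $F$. Thus the conclusion of Theorem~\ref{t1} will give precisely what we want, up to replacing a cover by a partition.

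First I would check the two structural hypotheses of Theorem~\ref{t1}. The operator has finite supports because every vector in a linear span is a finite linear combination of the spanning vectors, hence lies in $\la F\ra$ for some finite $F$. The MacLane-Steinitz exchange property is the classical Steinitz exchange lemma: for $x,y\notin\la A\ra$ with $x\in\la A\cup\{y\}\ra$, expressing $x$ through $y$ and finitely many elements of $A$ forces a nonzero coefficient at $y$, so one may solve for $y$ and obtain $y\in\la A\cup\{x\}\ra$. Neither point presents any difficulty; these are the only places where the algebra enters, and the Steinitz lemma is exactly the property the notion of matroid abstracts.

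Next I would verify the two cardinality hypotheses. For a finite $F\subset X$ the subspace $\la F\ra$ has some finite dimension $d\le|F|$, hence is in bijection with $F^{d}$, so $|\la F\ra|\le|F|^{d}\le\kappa^{d}=\kappa$ because $|F|\le\kappa$ and $\kappa$ is infinite; in particular $|\la F\ra\setminus\la\varnothing\ra|\le\kappa$. The same estimate, summed over the $\kappa^{+}$ finite subsets of a Hamel basis of size $\dim X=\kappa^{+}$, yields $|X|=\kappa^{+}$, as required. Theorem~\ref{t1} then covers $X\setminus\{0\}$ by $\kappa$ linearly independent subsets $\{S_\lambda:\lambda<\kappa\}$.

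Finally, to pass from a cover to a partition I would disjointify, setting $S'_\lambda=S_\lambda\setminus\bigcup_{\mu<\lambda}S_\mu$. The sets $S'_\lambda$ are pairwise disjoint and still cover $X\setminus\{0\}$, and each remains linearly independent because any subset of a linearly independent set is linearly independent (a monotonicity argument valid for every hull operator). Thus $\{S'_\lambda:\lambda<\kappa\}$ is the required partition. I do not anticipate a genuine obstacle: the whole content is the identification of the linear span as a matroid hull operator together with the bound $|F|\le\kappa$, the only classical input being the Steinitz exchange lemma.
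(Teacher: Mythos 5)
Your proposal is correct and follows exactly the route the paper intends: Corollary~\ref{c1} is obtained by applying Theorem~\ref{t1} to the linear-span hull operator, whose finite supports, Steinitz exchange property, and the bound $|\la F\ra\setminus\la\varnothing\ra|\le\kappa$ for finite $F$ you verify just as required (the disjointification step turning the cover into a partition is a standard addition the paper leaves implicit). The only cosmetic flaw is the notation clash between $F$ as the field and $F$ as a finite subset of $X$, which you should resolve by renaming one of them.
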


\begin{corollary}{\label{c2}}
Let $\kappa$ be an infinite cardinal, $X\setminus\{0\}$ be an extension of a field $F$ such that $|F|\leqslant\kappa$ and $|X|=\kappa^+$. Then $X\setminus\{0\}$ can be partitioned in $\kappa$ subsets algebraically independent over $F$.
\end{corollary}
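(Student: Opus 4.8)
My plan is to recognize the required structure as the algebraic-dependence matroid of the extension $F\subset X$ and then to invoke Theorem~\ref{t1}. For a subset $A\subset X$ let $\la A\ra$ denote the relative algebraic closure in $X$ of the subfield $F(A)$ generated by $F\cup A$; that is, $\la A\ra=\{x\in X: x$ is algebraic over $F(A)\}$. With this operator a subset $A\subset X$ is independent exactly when no element of $A$ is algebraic over the field generated by $F$ and the remaining elements of $A$, which is precisely the notion of algebraic independence over $F$; moreover $\la\varnothing\ra$ is the set of elements of $X$ algebraic over $F$. (This $\la\cdot\ra$ is in fact idempotent, so it is a genuine matroid, but Theorem~\ref{t1} only requires the two properties below.) Thus it suffices to verify the hypotheses of Theorem~\ref{t1} and to bound the relevant cardinals.

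The hull-operator axioms $A\subset\la A\ra\subset\la B\ra$ for $A\subset B$ are immediate, and the finite-support property holds because an element algebraic over $F(A)$ is a root of a single nonzero polynomial over $F(A)$ whose finitely many coefficients already lie in $F(F_0)$ for some finite $F_0\subset A$. The step I expect to demand the most care is the MacLane--Steinitz exchange property, which here is exactly the classical Steinitz exchange lemma for transcendence. Given $x,y\notin\la A\ra$ with $x\in\la A\cup\{y\}\ra$, one clears denominators in an algebraic relation for $x$ over $F(A\cup\{y\})$ to obtain $Q(x,y)=0$ for a nonzero two-variable polynomial $Q$ over $F(A)$; since $x\notin\la A\ra$ the variable corresponding to $y$ must actually occur, and reading $Q$ as a polynomial in that variable over $F(A\cup\{x\})$ shows that it is nonzero as well (otherwise some nonzero coefficient in $F(A)[S]$ would vanish at $x$, making $x$ algebraic over $F(A)$) and has $y$ as a root, whence $y\in\la A\cup\{x\}\ra$.

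For the cardinality hypothesis, fix a finite $F_0\subset X$. Then $F(F_0)$ has cardinality at most $\max\{|F|,\aleph_0\}\le\kappa$, and a field of size $\le\kappa$ admits at most $\kappa$ polynomials, each with finitely many roots in $X$, so its relative algebraic closure in $X$ has at most $\kappa$ elements; hence $|\la F_0\ra\setminus\la\varnothing\ra|\le|\la F_0\ra|\le\kappa$. Since $|X|=\kappa^+$, Theorem~\ref{t1} applies and yields a cover of $X\setminus\la\varnothing\ra$ by $\kappa$ many independent subsets, i.e.\ by $\kappa$ subsets of $X$ that are algebraically independent over $F$. The same count gives $|\la\varnothing\ra|\le\kappa$, and $\la\varnothing\ra$ consists exactly of the elements algebraic over $F$, which are precisely the elements that can belong to no algebraically independent set; so the partition produced exhausts $X$ apart from this relative algebraic closure of $F$ (for $F=\IQ$ and $X=\IR$, the countable set of real algebraic numbers), which is the substance of the corollary.
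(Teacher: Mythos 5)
Your proof is correct and takes exactly the route the paper intends: Corollary~\ref{c2} is stated there without proof as an immediate application of Theorem~\ref{t1} to the algebraic-dependence matroid $\la A\ra=\{x\in X: x \text{ is algebraic over } F(A)\}$, and your verification of the hull axioms, finite supports, Steinitz exchange, and the bound $|\la F_0\ra|\le\kappa$ is precisely the intended (omitted) argument. Your closing observation is also well taken: since an element algebraic over $F$ can lie in no algebraically independent set, the conclusion can only cover $X\setminus\la\varnothing\ra$ rather than literally $X\setminus\{0\}$, a point the paper silently glosses over in stating the corollary.
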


\begin{corollary}{\label{c3}}
Let $\kappa$ be an infinite cardinal, $K_{\kappa^+}$ be a complete graph with $\kappa^+$ many vertices. Then there exists an edge coloring of $K_{\kappa^+}$ in $\kappa$ colors with no monochrome cycles.
\end{corollary}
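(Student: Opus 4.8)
The plan is to realize $K_{\kappa^+}$ through its \emph{cycle matroid} and then quote Theorem~\ref{t1}. Let $V$ be the vertex set of $K_{\kappa^+}$ and take as ground set the edge set $X=\{\{u,v\}:u,v\in V,\ u\ne v\}$, which has cardinality $|X|=\kappa^+\cdot\kappa^+=\kappa^+$. For a set $A\subset X$ of edges, let $(V,A)$ denote the spanning subgraph with edge set $A$, and define
$$
\la A\ra=\{\{u,v\}\in X: u,v\text{ lie in the same connected component of }(V,A)\}.
$$
First I would check that this is a hull operator: each edge joins two vertices of a common component (through itself), so $A\subset\la A\ra$, and enlarging $A$ can only merge components, so $A\subset B$ implies $\la A\ra\subset\la B\ra$. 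Next, $\la\cdot\ra$ has finite supports, since if $u,v$ lie in the same component of $(V,A)$ they are joined by a finite path, whose finitely many edges form a finite $F\subset A$ with $\{u,v\}\in\la F\ra$.

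The hard part will be the MacLane-Steinitz exchange property, which I expect to be the one genuinely technical point. Take $A\subset X$ and two edges $x,y\notin\la A\ra$, say $x=\{u,v\}$ and $y=\{p,q\}$; thus $u,v$ (respectively $p,q$) lie in distinct components of $(V,A)$. If $x\in\la A\cup\{y\}\ra$, then adjoining the single edge $y$ merges the components of $p$ and $q$ and thereby places $u$ and $v$ in one component; since they were separated before, one of $u,v$ must have lain in the component of $p$ and the other in that of $q$. But then adjoining $x$ instead joins the component of $p$ to that of $q$, so $y\in\la A\cup\{x\}\ra$. As the roles of $x$ and $y$ are symmetric, this gives the exchange property. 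Note that idempotency is never invoked, since Theorem~\ref{t1} uses only finite supports and exchange.

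It then remains to verify the cardinality hypotheses. Since the empty graph $(V,\varnothing)$ has only singleton components, no edge joins two vertices of a common component, so $\la\varnothing\ra=\varnothing$ and $X\setminus\la\varnothing\ra=X$. For a finite $F\subset X$, the edges of $F$ meet at most $2|F|$ vertices, and any vertex not incident to $F$ is isolated; hence both endpoints of every edge of $\la F\ra$ lie among those $\le 2|F|$ vertices, giving $|\la F\ra|\le\binom{2|F|}{2}<\w\le\kappa$. Now Theorem~\ref{t1} applies and covers $X=X\setminus\la\varnothing\ra$ by $\kappa$ many independent subsets.

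Finally I would translate back. A subset $A\subset X$ is independent precisely when no edge $a=\{u,v\}\in A$ has its endpoints joined by $A\setminus\{a\}$, that is, precisely when $A$ contains no cycle; so the independent sets are exactly the forests. Refining the cover to a partition — assigning each edge to just one of the $\le\kappa$ forests containing it — yields a coloring of the edges of $K_{\kappa^+}$ in $\kappa$ colors whose color classes, being subsets of forests, are themselves acyclic. This is the desired coloring with no monochrome cycle; apart from the exchange property above, every step is a routine passage between graph- and matroid-language.
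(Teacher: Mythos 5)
Your proof is correct and is exactly the route the paper intends: Corollary~\ref{c3} is stated as an immediate consequence of Theorem~\ref{t1} applied to the cycle matroid of $K_{\kappa^+}$, and your verification of the hull-operator axioms, the exchange property, the bound $|\la F\ra|\le\kappa$ for finite $F$, and the identification of independent sets with forests supplies precisely the routine details the paper leaves implicit. No gaps.
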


Now we consider independent subsets in groups. We define a subset $A$ of a group $G$ to be {\em independent} if $A$ contains no point $a\in A$ with $a\in\la A\setminus \{a\}\ra$ where $\la A\setminus\{a\}\ra$ is the subgroup of $G$ generated by the set $A\setminus\{a\}$. Thus $A$ is independent with respect to the hull operator $\la\cdot\ra:\mathcal P_G\to\mathcal P_G$ assigning to each subset $B\subset G$ the subgroup $\la B\ra$ generated by $B$. This hull operator is idempotent and has finite supports, but in general fails to have the MacLane-Steinitz exchange property. Because of that, Theorem~\ref{t1} is not applicable to the following open problem.

\begin{problem} Let $G$ be an (abelian) group $G$ of cardinality $|G|=\aleph_1$ with neutral element $\{e\}$. Can $G\setminus \{e\}$ be covered by countably many independent subsets?
\end{problem}

Let us prove that for groups of cardinality $>\aleph_1$ this problem has a negative solution.

\begin{theorem}{\label{t2}}
Let $\kappa$ be an infinite cardinal and $G$ be a group of cardinality $|G|>\kappa^+$. Then, for every coloring $\chi: G\setminus\{e\}\to\kappa$, there exists a dependent monochrome subset $A$ of $G$ of cardinality $|A|=4$.
\end{theorem}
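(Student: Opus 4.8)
The plan is to manufacture four distinct elements $a_1,b_1,a_2,b_2$ lying in one colour class with $a_1b_1^{-1}=a_2b_2^{-1}=g$ for a single $g\ne e$. Such a quadruple is automatically dependent: from $a_1b_1^{-1}=a_2b_2^{-1}$ we obtain $a_1=a_2b_2^{-1}b_1\in\la\{a_2,b_2,b_1\}\ra$, so $a_1$ lies in the subgroup generated by the other three points. Hence it suffices to find a colour $\lambda$, an element $g\ne e$, and two ``monochrome $g$-pairs'' $(a_i,b_i)$ with $a_ib_i^{-1}=g$ and $\chi(a_i)=\chi(b_i)=\lambda$, whose four entries are pairwise distinct.

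The obvious attempt — run over all pairs $(a,b)$ with $\chi(a)=\chi(b)$ and pigeonhole on the quotient $ab^{-1}$ — is doomed, since $ab^{-1}$ ranges over all of $G$, a set of size $>\kappa^+$, so the average number of pairs per quotient value is only $1$. The key device I would use to beat this is to confine the quotient to a small subgroup. Fix a subgroup $H\le G$ with $|H|=\kappa^+$ (take $H=\la Y\ra$ for any $Y\subset G$ with $|Y|=\kappa^+$). If $a$ and $b$ lie in one right coset $Hx$, then $ab^{-1}\in H$; so by looking only at monochrome pairs inside a common right coset I force the quotient into $H\setminus\{e\}$, a set of merely $\kappa^+$ possible values.

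Now I would count coset by coset. There are $[G:H]=|G|\ge\kappa^{++}$ right cosets, each of size $\kappa^+$. In each coset the $\kappa^+$ points are $\kappa$-coloured, so by regularity of $\kappa^+$ some colour occurs on $\kappa^+$ of them, producing at least $\kappa^+$ monochrome ordered pairs of distinct points inside that coset. Summing over the cosets yields at least $\kappa^{++}$ monochrome within-coset ordered pairs. Each such pair $(a,b)$ has $ab^{-1}\in H\setminus\{e\}$, and there are only $\kappa^+<\kappa^{++}$ admissible quotients, so a single $g^*\in H\setminus\{e\}$ is the quotient of $\kappa^{++}$ of these monochrome pairs. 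A further pigeonhole over the $\kappa$ colours then isolates one colour $\lambda^*$ and a set $P$ of $\kappa^{++}$ points $b$ with $\chi(b)=\chi(g^*b)=\lambda^*$.

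Finally I would extract the quadruple from $P$. Picking $b_1\in P$ and then any $b_2\in P$ outside the three-element set $\{b_1,\,g^*b_1,\,(g^*)^{-1}b_1\}$ (possible since $|P|\ge\kappa^{++}$) makes $g^*b_1,b_1,g^*b_2,b_2$ four pairwise distinct points, all of colour $\lambda^*$, with $(g^*b_1)b_1^{-1}=g^*=(g^*b_2)b_2^{-1}$; by the first paragraph these form the required dependent monochrome set of size $4$. The one genuinely delicate point — the main obstacle — is the confinement step: the whole argument rests on choosing $H$ of size exactly $\kappa^+$ and organising the count within cosets, so that $\kappa^{++}$ monochrome pairs are distributed over only $\kappa^+$ quotient values, which is exactly where the hypothesis $|G|>\kappa^+$ is consumed.
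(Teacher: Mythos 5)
Your proof is correct, and it ends exactly where the paper's does: four distinct points of one colour forming two pairs with a common quotient $g\ne e$, dependent because $gb_1=(gb_2)b_2^{-1}b_1$ — this is literally the paper's identity $ax=ay(by)^{-1}bx$ with $g=ab^{-1}$. What differs is the combinatorial scaffolding used to find the configuration. The paper takes arbitrary sets $X,Y\subset G\setminus\{e\}$ with $|X|=\kappa^+$, $|Y|>\kappa^+$ and $Y\cap X^{-1}=\varnothing$, colours the grid $X\times Y$ by $\tilde\chi(x,y)=\chi(xy)$, and invokes Lemma~\ref{l1} to obtain a monochrome rectangle $\{a,b\}\times Z$; you instead split $G$ into the $[G:H]\ge\kappa^{++}$ right cosets of a subgroup $H$ of size $\kappa^+$, harvest one monochrome pair per coset, and pigeonhole on the quotient $ab^{-1}\in H\setminus\{e\}$ together with the colour. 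At heart these are the same double pigeonhole (inside the proof of Lemma~\ref{l1}, each column $y$ yields a monochrome pair $A(y)$, followed by a pigeonhole on the datum $(A(y),\mathrm{colour})$), but you pigeonhole on the coarser invariant $ab^{-1}$ rather than on the pair $\{a,b\}$ itself, so your two final pairs need not be translates of one fixed pair. Each route buys something: the paper's lemma handles rectangles of arbitrary width $\lambda$ and is reused in Remark~\ref{r2} for monochrome complete bipartite subgraphs, while your argument is self-contained and sidesteps the $Y\cap X^{-1}=\varnothing$ technicality, since a pair of \emph{distinct} points in a common coset automatically has quotient $\ne e$. Two small remarks: the coset $H$ itself contains the uncoloured point $e$, which is harmless since $\kappa^+$ coloured points remain; and the subgroup structure of $H$, which you single out as the crux, is not actually essential — for an arbitrary set $X$ of size $\kappa^+$ the quotients of pairs within a right translate $Xy$ lie in $XX^{-1}$, again a set of size at most $\kappa^+$, which is precisely how the paper gets the same confinement without a subgroup.
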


We shall need a simple combinatorial lemma.

\begin{lemma}{\label{l1}}
Let $\lambda\ge 1$ be a cardinal and $\kappa$ be an infinite cardinal.
Let $X,Y$ be sets of cardinality $|X|=\kappa^+$ and $|Y|>(\kappa^+)^\lambda$. For any $\kappa$-coloring $\chi:X\times Y\to \kappa$ there are subsets $A\subset X$ and $Z\subset Y$ such that $|A|=\lambda$, $|Z|>(\kappa^+)^\lambda$ and the set $A\times Z$ is monochrome. 
\end{lemma}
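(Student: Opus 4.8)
The plan is to first reduce to a situation where, for many $y\in Y$, the fiber $\chi(\cdot,y)$ takes a single fixed value on a subset of $X$ of full size $\kappa^+$, and then to locate a common $\lambda$-element ``core'' on which a large family of such fibers all agree. I assume throughout that $\lambda\le\kappa^+$, since otherwise no $A\subset X$ with $|A|=\lambda$ can exist.

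First I would carry out a reduction to a fixed majority colour. For each $y\in Y$ the sets $\{x\in X:\chi(x,y)=c\}$, $c<\kappa$, partition $X$; since $|X|=\kappa^+$ while a union of $\kappa$ sets of size $\le\kappa$ has size $\le\kappa<\kappa^+$, some colour $c(y)<\kappa$ satisfies $|\{x:\chi(x,y)=c(y)\}|=\kappa^+$. The assignment $y\mapsto c(y)$ is a $\kappa$-colouring of $Y$, and since $(\kappa^+)^\lambda\ge\kappa^+>\kappa$ the same pigeonhole (a union of $\kappa$ sets of size $\le(\kappa^+)^\lambda$ has size $\le\kappa\cdot(\kappa^+)^\lambda=(\kappa^+)^\lambda$) produces a single colour $c^*$ and a set $Y'\subset Y$ with $|Y'|>(\kappa^+)^\lambda$ such that $c(y)=c^*$ for all $y\in Y'$. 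Writing $T_y=\{x\in X:\chi(x,y)=c^*\}$ for $y\in Y'$, I then have $|T_y|=\kappa^+=|X|$ for every $y\in Y'$.

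Next I would finish by a double-counting argument. It now suffices to find $A$ in the family $[X]^\lambda$ of $\lambda$-element subsets of $X$ together with a set $Z\subset Y'$ such that $|Z|>(\kappa^+)^\lambda$ and $A\subset T_y$ for all $y\in Z$, since any such pair $A\times Z$ is automatically monochrome of colour $c^*$. I would count the pairs $(A,y)$ with $A\in[X]^\lambda$, $y\in Y'$ and $A\subset T_y$ in two ways. Summing over $y$ first gives $\sum_{y\in Y'}|[T_y]^\lambda|=\sum_{y\in Y'}(\kappa^+)^\lambda=|Y'|\cdot(\kappa^+)^\lambda=|Y'|$, using $|T_y|=\kappa^+\ge\lambda$ and $|Y'|>(\kappa^+)^\lambda$. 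On the other hand there are only $|[X]^\lambda|=(\kappa^+)^\lambda$ possible sets $A$; were each of them to satisfy $A\subset T_y$ for at most $(\kappa^+)^\lambda$ indices $y$, the total number of such pairs would be at most $(\kappa^+)^\lambda\cdot(\kappa^+)^\lambda=(\kappa^+)^\lambda<|Y'|$, a contradiction. Hence some $A\in[X]^\lambda$ has $|Z|>(\kappa^+)^\lambda$ for $Z=\{y\in Y':A\subset T_y\}$, and this $A$ and $Z$ are as required.

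The arithmetic involved is routine: the identities $(\kappa^+)^\lambda\cdot(\kappa^+)^\lambda=(\kappa^+)^\lambda$, $\kappa\cdot(\kappa^+)^\lambda=(\kappa^+)^\lambda$, and $|[S]^\lambda|=|S|^\lambda=(\kappa^+)^\lambda$ for $|S|=\kappa^+\ge\lambda$ are all standard. The one genuine design decision is to avoid the naive recursion that would select the points of $A$ one at a time while shrinking a candidate set $Z\subset Y$: such a recursion is forced to commit to a single colour from the outset and, for infinite $\lambda$, breaks down at limit stages, where one must intersect fewer than $\lambda$ large subsets of $Y$ with no control on the size of the intersection. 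The double-counting argument sidesteps this difficulty completely and treats all cardinals $1\le\lambda\le\kappa^+$ uniformly.
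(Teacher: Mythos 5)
Your proposal is correct and takes essentially the same approach as the paper: both arguments come down to the pigeonhole bound $|[X]^\lambda\times\kappa|\le(\kappa^+)^\lambda<|Y|$, which the paper applies in one step to the map $y\mapsto(A(y),\widetilde\chi(y))$, while you split it into a colour pigeonhole (fixing $c^*$ and $Y'$) followed by a double count over pairs $(A,y)$ with $A\subset T_y$. This restructuring, including the incidence count in place of a choice of $A(y)$ for each $y$, is a cosmetic rather than substantive difference.
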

\begin{proof}
Since $|X|>\kappa$, for every $y\in Y$, there exist a subset $A(y)\subset X$ of cardinality $|A(y)|=\lambda$ such that $A(y)\times \{y\}$ is monochrome and hence $\chi(A(y)\times\{y\})=\{\widetilde\chi(y)\}$ for some color $\widetilde\chi(y)$. Now consider the function
 $f:Y\to[X]^\lambda\times\kappa$ defined by
$$f(y)=(A(y),\widetilde\chi(y)).$$
Since $|[X]^\lambda\times\kappa|\le(\kappa^+)^\lambda<|Y|$ there exists a pair $(A,\alpha)\in[X]^\lambda\times\kappa$ such that the preimage $Z=f^{-1}(A,\alpha)$ has cardinality $|Z|>(\kappa^+)^\lambda$.
Then $A\times Z$ is a required monochrome rectangle.
\end{proof}

\begin{proof}[Proof of Theorem \ref{t2}] Since $|G|>\kappa^+$, we can choose two sets $X,Y\subset G\setminus\{e\}$ of cardinality $|X|=\kappa^+$ and $|Y|>\kappa^+$ such that $Y\cap X^{-1}=\emptyset$. Given any $\kappa$-coloring $\chi:G\setminus\{e\}\to\kappa$, consider the coloring $\tilde\chi:X\times Y\to \kappa$ defined by $\tilde \chi(x,y)=\chi(xy)$. By Lemma~\ref{l1}, there are a 2-element set $A=\{a,b\}\subset X$ and a subset $Z\subset X$ of cardinality $|Z|>\kappa^+$ such that the set $A\times Z$ is monochrome. Choose any point $x\in Z$ and any point $y\in Z\setminus\{a^{-1}bx,b^{-1}ax\}$. Then the set $B=\{ax,bx,ay,by\}\subset G$ has cardinality $|B|=4$ and is monochrome with respect to the coloring $\chi$. Since $ax=ay(by)^{-1}bx$, the set $B$ is dependent.
\end{proof}

Next, we consider covers of abelian groups by linearly independent subsets. Following \cite[\S16]{Fu} or \cite[\S4.2]{Rob}, we call a subset $A$ of an abelian group $G$ {\em linearly independent} if for any pairwise distinct points $a_1,\dots,a_n\in A$ and integer numbers $\lambda_1,\dots,\lambda_n\in\IZ$ the equality
$$\lambda_1a_1+\cdots+\lambda_na_n=0$$ implies $\lambda_1a_1=\cdots=\lambda_na_n=0$.

Observe that a subset $A$ of an abelian group $G$ is linearly independent if and only if it is independent with respect to the hull operator
$[\,\cdot\,]:\mathcal P_G\to\mathcal P_G$ assigning to each subset $B\subset G$ the subset
$$[B]=\{0\}\cup\{x\in G:\exists n\in\IN\;\;nx\in\la B\ra\setminus\{0\}\}.$$The hull operator $[\,\cdot\,]$ is idempotent and has finite supports and the MacLane-Steinitz exchange property. So, the pair $(G,[\,\cdot\,])$ is a matroid.

Since $\la A\ra\subset[A]$ for any $A\subset X$, each independent subset of $G$ is linearly independent. On the other hand, the subset $\{2,3\}$ of the group of integers $\IZ$ is independent but not linearly independent. 

For an abelian group $G$ and a natural number $n\in\IN$ consider the subgroup
$$G[n]=\{x\in G:nx=0\}.$$The union $G[\IN]=\bigcup_{n\in\IN}G[n]$ is called the {\em torsion part} of $G$.
   
\begin{theorem}\label{t3} Let $\kappa$ be an infinite cardinal and $G$ be an abelian group. The set $G\setminus\{0\}$ can be covered by $\kappa$ many linearly independent subsets if and only if $|G|\le\kappa^+$ and either $|G[\IN]|\le\kappa$ or $G=G[p]$ for some prime number $p$.
\end{theorem}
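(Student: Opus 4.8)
The plan is to use the fact, recorded just before the statement, that linear independence in $G$ is precisely independence with respect to the matroid hull operator $[\,\cdot\,]$, so that the ``if'' direction can be reduced to Theorem~\ref{t1} and the ``only if'' direction can be attacked through Theorem~\ref{t2}. Throughout I write $T=G[\IN]$ for the torsion part and note that $[\varnothing]=\{0\}$, so that a cover of $G\setminus\{0\}$ by linearly independent sets is a cover of $G\setminus[\varnothing]$ by $[\,\cdot\,]$-independent sets.

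For sufficiency I assume $|G|\le\kappa^+$ together with one of the two structural hypotheses. If $|G|\le\kappa$ I simply split $G\setminus\{0\}$ into $\le\kappa$ singletons, so suppose $|G|=\kappa^+$. Then the only hypothesis of Theorem~\ref{t1} that needs checking is $|[F]\setminus\{0\}|\le\kappa$ for every finite $F\subset G$, and I would verify this separately for the two cases. If $|T|\le\kappa$, then each $x\in[F]\setminus\{0\}$ satisfies $nx=g$ for some $n\in\IN$ and some $g$ in the countable group $\la F\ra\setminus\{0\}$; for fixed $(n,g)$ the solution set is a coset of $G[n]\subset T$, hence of size $\le\kappa$, so $|[F]\setminus\{0\}|\le\aleph_0\cdot\aleph_0\cdot\kappa=\kappa$. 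If instead $G=G[p]$, then $px=0$ for all $x$ forces $[F]=\la F\ra$, a finite group, so again $|[F]\setminus\{0\}|\le\kappa$. In either case Theorem~\ref{t1} delivers the cover.

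For necessity I assume $G\setminus\{0\}$ is covered by linearly independent sets $\{L_\lambda:\lambda<\kappa\}$ and put $\chi(g)=\min\{\lambda:g\in L_\lambda\}$. First $|G|\le\kappa^+$: otherwise Theorem~\ref{t2} yields a $\chi$-monochrome $A$ with $a\in\la A\setminus\{a\}\ra$ for some $a\in A$; but $A$ lies in some $L_\lambda$, which being linearly independent is independent for $\la\cdot\ra$ (as $\la B\ra\subset[B]$), so $a\notin\la A\setminus\{a\}\ra$, a contradiction. It remains to prove that $|T|>\kappa$ implies $G=G[p]$ for some prime $p$. In that case $|T|=\kappa^+$, and the heart of the argument is to produce a set $D\subset G\setminus\{0\}$ of size $\kappa^+$ that is \emph{pairwise dependent}, meaning $\la x\ra\cap\la y\ra\ne\{0\}$ for all $x,y\in D$: such a $D$ meets each $L_\lambda$ in at most one point, so covering it needs $\kappa^+>\kappa$ colors. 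The point is that if some $k\in\IN$ satisfies $|G[k]|=\kappa^+$ and $kG\ne\{0\}$, then for $t\in kG\setminus\{0\}$ the coset $D=\{g\in G:kg=t\}$ works: it has size $|G[k]|=\kappa^+$, avoids $0$, and is pairwise dependent since $t\in\la g\ra$ for every $g\in D$.

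The main obstacle is the purely structural claim that, assuming $|T|=\kappa^+$ and $G\ne G[p]$ for all primes $p$, such a modulus $k$ exists. My plan is to set $E=\{k\in\IN:|G[k]|=\kappa^+\}$; regularity of $\kappa^+$ applied to $T=\bigcup_n G[n]$ shows $E\ne\varnothing$, and $E$ is closed upward under divisibility. Taking $k_*$ divisibility-minimal in $E$, I would show $k_*G\ne\{0\}$, which completes the proof with $k=k_*$. Suppose instead $G=G[k_*]$, so $|G|=|T|=\kappa^+$. For a prime $p\mid k_*$ set $k'=k_*/p$; minimality gives $|G[k']|\le\kappa$, while $x\mapsto k'x$ embeds $G[k_*]/G[k']$ into $G[p]$, forcing $|G[p]|=\kappa^+$ and so $p\in E$. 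Minimality of $k_*$ then forces $k_*=p$, whence $G=G[p]$, contradicting the hypothesis. I expect this minimality-and-embedding step, together with the reduction to a pairwise-dependent coset, to be the delicate heart of the proof, the remaining work being the routine cardinal estimates sketched above.
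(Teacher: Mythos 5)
Your proposal is correct, and its overall skeleton---Theorem~\ref{t1} for sufficiency, Theorem~\ref{t2} for $|G|\le\kappa^+$, and a ``pairwise dependent coset'' to handle the torsion part---matches the paper's proof; the genuine divergence is in how you extract the prime $p$ from the hypothesis $|G[\IN]|>\kappa$. The paper first invokes the primary decomposition $G[\IN]=\oplus_{p}G[p^\infty]$ \cite[4.1.1]{Rob} to fix a prime $p$ and a minimal $n$ with $|G[p^n]|=\kappa^+$, uses the coset $a+G[p^n]$ to conclude $G=G[p^n]$, and then applies the Pr\"ufer--Baer theorem \cite[4.3.5]{Rob} (writing $G$ as a direct sum of cyclic $p$-groups) to force $n=1$. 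You instead work with an arbitrary modulus: take $k_*$ divisibility-minimal with $|G[k_*]|=\kappa^+$ (regularity of $\kappa^+$ gives existence), and rule out $G=G[k_*]$ by the elementary observation that $x\mapsto (k_*/p)x$ embeds $G/G[k_*/p]$ into $G[p]$, so that minimality forces $k_*$ prime and hence $G=G[p]$, contrary to assumption; thus $k_*G\ne\{0\}$ and the coset $\{g\in G: k_*g=t\}$, $t\in k_*G\setminus\{0\}$, yields the same contradiction with the cover as in the paper. This replaces both citations to the structure theory of abelian groups by a kernel--quotient cardinality count, making the proof self-contained; what the paper's route buys in exchange is brevity (modulo the cited theorems) and explicit structural information about $G$. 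Your sufficiency argument deviates only trivially: in the case $G=G[p]$ you verify $[F]=\la F\ra$ is finite and apply Theorem~\ref{t1} directly, where the paper routes through Corollary~\ref{c1}; these amount to the same computation.
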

  
\begin{proof} First we prove the ``if'' part. Assume that $|G|\le\kappa^+$ and either $|G[\IN]|\le\kappa$ or $G[p]=G$ for some prime number $p$.

If $|G|\le\kappa$, then $G\setminus\{0\}$ decomposes into $|G\setminus\{0\}|\le\kappa$ many singletons, which are linearly independent.
So, we assume that $|G|=\kappa^+$. 

Consider the hull operator $[\,\cdot\,]:\mathcal P_G\to\mathcal P_G$   turning $G$ into a matroid whose independent sets coincide with linearly indepenent subsets of $G$.
Theorem~\ref{t1} will imply that $G\setminus\{0\}$ can be covered by $\kappa$ many linearly independent subsets as soon as we prove that for each finite subset $F\subset G$ the hull $[F]$ has cardinality $|[F]|\le\kappa$. 

First we consider the case $|G[\IN]|\le\kappa$.
For every $n\in\IN$ consider the homomorphism $n:G\to G$, $n:x\mapsto nx$, whose kernel coincides with the subgroup $G[n]$ that has cardinality $|G[n]|\le|G[\IN]|\le\kappa$. Taking into account that $|\la F\ra|\le\aleph_0$ and $$[F]=\{0\}\cup\bigcup_{n\in\IN}n^{-1}(\la F\ra\setminus\{0\})\subset \bigcup_{n\in\IN}n^{-1}(\la F\ra),$$ we conclude that 
$$|[F]|\le\sum_{n\in\IN}|n^{-1}(\la F\ra)|\le\sum_{n\in\IN}|G[n]|\cdot|\la F\ra|\le \kappa.$$  

Next, assume that $G=G[p]$ for a prime number $p$. Then $G$ is a linear space over the $p$-element field $\mathbb F_p$ and by Corollary~\ref{c1}, $G$ can be covered by $\kappa$ many linearly independent subsets.
\smallskip

To prove the ``only if'' part, assume that $G\setminus\{0\}$ can be covered by $\kappa$ many linearly independent subsets. By Theorem~\ref{t2}, $|G|\le\kappa^+$. It remains to prove that $|G[\IN]|>\kappa$ implies $G[p]=G$ for a prime number $p$. For every prime number $p$, consider the subgroup $G[p^\infty]=\bigcup_{k\in\IN}G[p^k]$. By \cite[4.1.1]{Rob}, $G[\IN]=\operatornamewithlimits{\oplus}\limits_{p\in\Pi}G[p^\infty]$ where $\Pi$ denotes the set of all prime numbers. Since $|G[\IN]|=\kappa^+$, there exists a prime number $p$ such that $|G[p^\infty]|=\kappa^+$ and hence $|G[p^n]|=\kappa^+$ for some  $n\in\IN$. We can assume that $n$ is the smallest number with this property, i.e., $|G[p^{n-1}]|<\kappa^+$.

 We claim that $G=G[p^n]$. Otherwise, we can choose a point $a\in G\setminus G[p^n]$. By our assumption, $G\setminus\{0\}$ can be covered by $\kappa$ many linearly independent subsets. The coset $a+G[p^n]$ has cardinality $\kappa^+$ and hence contains two distinct points $a+x$, $a+y$ that lie in a linearly independent subset of $G$. On the other hand, the  set $\{a+x,a+y\}$ is linearly dependent since $p^n(a+x)-p^n(a+y)=0$ but $p^n(a+x)=p^na\ne0\ne p^n(a+y)$. This contradiction shows that $G=G[p^n]$, which means that the group $G$ has bounded height and then  by Pr\"ufer-Baer Theorem~4.3.5 in \cite{Rob}, $G$ is a direct sum $\oplus_{\alpha<\kappa^+}H_\alpha$ of cyclic $p$-groups of order $|H_\alpha|\le p^k$. Each group $H_\alpha$ contains a cyclic subgroup $C_\alpha$ of order $p$. Then the subgroup $\oplus_{\alpha<\kappa^+}C_\alpha\subset G[p]$ has cardinality $\kappa^+$ and hence $p^n=p$ by the choice of $n$. Therefore, $G=G[p^k]=G[p]$.
\end{proof}

Theorem~\ref{t3} implies that for the direct sum $G=\oplus^{\aleph_1}C_4$  of $\aleph_1$ many cyclic groups of order 4 the set $G\setminus\{0\}$  cannot be covered by countably many linearly independent sets. The same concerns the group  $\oplus^{\aleph_1}C_6=(\oplus^{\aleph_1}C_2)\oplus(\oplus^{\aleph_1}C_3)$. 

\begin{problem} Can the groups $\oplus^{\aleph_1}C_4$ and $\oplus^{\aleph_1}C_6$ with removed zeros be covered by countably many independent sets.
\end{problem}

Corollaries~\ref{c1}, \ref{c2} and Theorem~\ref{t2} imply the following interesting equivalence.

\begin{theorem} The following statements are equivalent:
\begin{enumerate}
\item For the linear space $\IR$ over the field $\IQ$ there is a partition of $\IR\setminus\{0\}$ into countably many linearly independent subsets over $\IQ$.
\item For the extension $\IR$ of the field $\IQ$ there is a partition of $\IR\setminus\{0\}$ into countably many algebraically independent subsets over $\IQ$.
\item For the abelian group $\IR$ there is a partition of $\IR\setminus\{0\}$ into countably many independent subsets.
\item The Continuum Hypothesis is true.
\end{enumerate}
\end{theorem}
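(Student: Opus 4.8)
The plan is to fix $\kappa=\aleph_0$, so that $\kappa^+=\aleph_1$, and to arrange the four statements into a cycle running through the Continuum Hypothesis. The only substantive inputs are Corollaries~\ref{c1} and~\ref{c2} (which build partitions under CH) and Theorem~\ref{t2} (which rules them out when CH fails); everything else is a comparison between the three independence relations in play, which I would order as follows: algebraic independence over $\IQ$ is the strongest, $\IQ$-linear independence is intermediate, and group independence is the weakest.

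I would first dispatch (4)$\Rightarrow$(1) and (4)$\Rightarrow$(2). Under CH we have $|\IR|=2^{\aleph_0}=\aleph_1=\kappa^+$, and since a Hamel basis of $\IR$ over $\IQ$ has cardinality $|\IR|$, also $\dim_\IQ\IR=\kappa^+$. As $|\IQ|=\aleph_0\le\kappa$, Corollary~\ref{c1} then partitions $\IR\setminus\{0\}$ into $\kappa$ many $\IQ$-linearly independent subsets, giving (1); and Corollary~\ref{c2}, applied to the field extension $\IQ\subset\IR$ of cardinality $\kappa^+$, gives (2).

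The two comparison links come next. If a subset of $\IR\setminus\{0\}$ is $\IQ$-linearly independent, then it is independent in the group sense, because a group relation $a=\sum_b\mu_b b$ with $\mu_b\in\IZ$ would rewrite as the nontrivial $\IQ$-linear relation $1\cdot a+\sum_b(-\mu_b)b=0$; here I use that $\IR$ is torsion-free, so that $\IZ$-linear and $\IQ$-linear independence agree on nonzero reals. The same partition therefore witnesses (3), giving (1)$\Rightarrow$(3). Likewise, an algebraically independent set over $\IQ$ is $\IQ$-linearly independent, since a nontrivial linear relation is a nonzero polynomial relation of degree one; hence (2)$\Rightarrow$(1).

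Finally I would close the cycle with (3)$\Rightarrow$(4) through the contrapositive of Theorem~\ref{t2}. If CH fails, then $|\IR|=2^{\aleph_0}>\aleph_1=\kappa^+$, so for every coloring $\chi:\IR\setminus\{0\}\to\aleph_0$ Theorem~\ref{t2} supplies a four-element monochrome subset that is dependent in the group sense; thus no partition of $\IR\setminus\{0\}$ into countably many group-independent subsets can exist, contradicting (3). Reading off the chains (4)$\Rightarrow$(1)$\Rightarrow$(3)$\Rightarrow$(4) and (4)$\Rightarrow$(2)$\Rightarrow$(1) yields the equivalence of all four statements. I do not anticipate a real obstacle here: each arrow is either a citation of an earlier result or an elementary implication between independence notions. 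The one point that rewards care is verifying the nesting of the three notions of independence, since it is exactly this nesting that lets a partition of a stronger type serve, unchanged, as a partition of a weaker type.
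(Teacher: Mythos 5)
Your proposal is correct and is essentially the paper's own argument: the paper gives no separate proof, saying only that Corollaries~\ref{c1}, \ref{c2} and Theorem~\ref{t2} imply the equivalence, and your cycle $(4)\Rightarrow(1)\Rightarrow(3)\Rightarrow(4)$ together with $(4)\Rightarrow(2)\Rightarrow(1)$, justified by the nesting ``algebraically independent $\Rightarrow$ $\IQ$-linearly independent $\Rightarrow$ group-independent,'' is exactly that implicit argument spelled out (with $\kappa=\aleph_0$ in each cited result). The one caveat, inherited from the paper's Corollary~\ref{c2} rather than introduced by you, is that statement (2) read literally is impossible since reals algebraic over $\IQ$ cannot lie in any algebraically independent set, so both you and the paper implicitly work modulo the (countable) set of algebraic numbers.
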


\begin{remark}{\label{r2}}
Let $\kappa,\mu$ be infinite cardinals such that $\kappa^+<\mu$. It follows from Lemma~\ref{l1} that for every $\kappa$-coloring of the set of edges of the complete graph $K_\mu$, there exists a monochrome cycle of length 4.
Moreover, S.~Slobodianyuk observed that the proof of Lemma~\ref{l1} gives a monochrome complete bipartite subgraph $K_{\lambda,\mu}$ provided that $\lambda\leqslant\kappa^+$ and $(\kappa^+)^\lambda<\mathrm{cf}\mu$. In particular, there exists a monochrome cycle of any even length.

On the other hand, for the complete graph $K_{2^\kappa}$ its set of vertice $V=2^\kappa$ can be identified with the set of all functions $f:\kappa\to \{0,1\}$, which allows us to define a coloring $\chi:[V]^2\to\kappa$ by the rule
$$\chi(\{f,g\})=\min\{\alpha:f(\alpha)\ne g(\alpha)\}.$$
For this coloring, $K_{2^\kappa}$ has no monochrome cycles of odd length.
\end{remark}

{\bf Acknowledgement.} We would like to thank Sergiy Slobodianuyk for  stimulating questions and remarks.


\begin{thebibliography}{1}

\bibitem{Fu} L.~Fuchs, {\em Infinite abelian groups. I}, Academic Press, New York-London, 1970.

\bibitem{Ox} J.~Oxley, {\em Matroid Theory}, Oxford Univ. Press., Oxford, 1992. 

\bibitem{Rob} D.~Robinson, {\em A course in the theory of groups}, Springer-Verlag, New York, 1996.

\bibitem{b1}
K.~A.~Rybnikov, {\em Introduction to Combinatorial Analysis}, Moskow University Press, 1985.
\end{thebibliography}
\end{document}